\documentclass[12pt]{amsart}
\usepackage{amssymb}
\usepackage[all]{xy}
\usepackage{tabularx}
\usepackage{booktabs}
\usepackage{caption}

\oddsidemargin -0.5cm
\evensidemargin -0.5cm
\topskip     0pt
\headheight  0pt
\footskip   18pt
\textheight 23cm
\textwidth 17cm

\newtheorem{thm}{Theorem}[section]
\newtheorem{lem}[thm]{Lemma}

\newtheorem{prop}[thm]{Proposition}
\newtheorem{ex}[thm]{Example}

\newtheorem*{prob*}{Open problem}

\theoremstyle{definition}

\newtheorem{defi}[thm]{Definition}

\theoremstyle{remark}

\newtheorem{rem}[thm]{Remark}
\newtheorem*{rem*}{Remark}


\DeclareMathOperator{\Aut}{Aut}
\DeclareMathOperator{\Lie}{Lie}

\newcommand{\kringel}{\mathbin{\raise1pt\hbox{$\scriptstyle\circ$}}}
\newcommand{\pkt}{\mathbin{\raise0pt\hbox{$\scriptstyle\bullet$}}}

\newcommand{\C}{\mathbb{C}}

\newcommand{\K}{\mathbb{K}}

\newcommand{\R}{\mathbb{R}}

\newcommand{\ad}{\mathop{\rm ad}}

\newcommand{\Der}{\mathop{\rm Der}}

\newcommand{\La}{\mathfrak{a}}
\newcommand{\Lb}{\mathfrak{b}}

\newcommand{\Lf}{\mathfrak{f}}
\newcommand{\Lg}{\mathfrak{g}}
\newcommand{\Lh}{\mathfrak{h}}

\newcommand{\Ll}{\mathfrak{l}}

\newcommand{\Ln}{\mathfrak{n}}

\newcommand{\Lr}{\mathfrak{r}}
\newcommand{\Ls}{\mathfrak{s}}

\newcommand{\CC}{\mathcal{C}}

\newcommand{\im}{\mathop{\rm im}}

\newcommand{\al}{\alpha}
\newcommand{\be}{\beta}

\newcommand{\la}{\lambda}

\newcommand{\ra}{\rightarrow}

\renewcommand{\phi}{\varphi}

\begin{document}

\title[Abelian subalgebras]{Abelian ideals of maximal 
dimension for solvable Lie algebras}

\author[D. Burde]{Dietrich Burde}
\author[M. Ceballos]{Manuel Ceballos}
\address{Fakult\"at f\"ur Mathematik\\
Universit\"at Wien\\
Nordbergstr. 15\\
1090 Wien \\
Austria}
\email{dietrich.burde@univie.ac.at}
\address{Departamento Geometria y Topologia\\
Universidad de Sevilla\\ 
Sevilla\\
Spain}
\email{mceballos@us.es}

\date{\today}

\subjclass{Primary 17B30, 17D25}
\thanks{The first author was supported by the Austrian Science Fund FWF, project P21683}
\thanks{The second author was supported by MTM 2010-19336 and FEDER}

\begin{abstract}
We compare the maximal dimension of abelian subalgebras and the maximal
dimension of abelian ideals for finite-dimensional Lie algebras. We show that
these dimensions coincide for solvable Lie algebras over an algebraically closed field
of characteristic zero. We compute this invariant for all complex nilpotent Lie algebras 
of dimension $n\le 7$. 
Furthermore we study the case where there exists an abelian subalgebra of
codimension $2$. Here we explicitly construct an abelian ideal of codimension $2$  
in case of nilpotent Lie algebras. 
\end{abstract}

\maketitle

\section{Introduction}

Let $\Lg$ be a finite-dimensional Lie algebra. Denote by $\al(\Lg)$ 
the maximal dimension of an abelian subalgebra of $\Lg$, and by $\be(\Lg)$
the maximal dimension of an abelian ideal of $\Lg$. Both invariants
are important for many subjects. First of all they are very useful invariants
in the study of Lie algebra contractions and degenerations. There is a large
literature, in particular for low-dimensional Lie algebras, see 
\cite{GRH,BU10,NPO,SEE,GOR}, and the references given therein. \\
Secondly, there are several results concerning the question of 
how big or small these maximal dimensions can be, compared to the dimension of the 
Lie algebra. For references see \cite{STE,RIL,MIL}. The results show, roughly
speaking, that a Lie algebra of large dimension contains abelian subalgebras of
large dimension. For example, the dimension of a nilpotent Lie algebra $\Lg$ satisfying
$\al(\Lg)=\ell$ is bounded by $\dim (\Lg)\le \frac{\ell(\ell+1)}{2}$ \cite{STE,RIL}.
There is a better bound for $2$-step nilpotent Lie algebras, see \cite{MIL2}.
If $\Lg$ is a complex solvable Lie algebra with $\al(\Lg)=\ell$, then we have
$\dim (\Lg)\le \frac{\ell(\ell+3)}{2}$, see \cite{MIL}. In general,
$\dim (\Lg)\le \frac{\ell(\ell+17)}{2}$ for any complex Lie algebra $\Lg$ with  $\al(\Lg)=\ell$,
see \cite{MIL}. \\
For semisimple Lie algebras $\Ls$ the invariant $\al(\Ls)$ has been completely 
determined by Malcev \cite{MAL}. Since there are no nontrivial abelian ideals in $\Ls$, we have
$\be(\Ls)=0$. Recently the study of abelian ideals in a Borel subalgebra
$\Lb$ of a simple complex Lie algebra $\Ls$ has drawn considerable attention.
We have indeed $\al(\Ls)=\be(\Lb)$, and this number can be computed purely in terms
of certain root system invariants, see \cite{SUT}. The result is reproduced for the
interested reader in table $1$.
\begin{table}[htp]
\centering
\caption{The invariant $\al$ for simple Lie algebras}
\newcolumntype{K}{>{\centering \arraybackslash $}X <{$}}
\begin{tabularx}{.5\textwidth}{*{3}{K}}
\toprule
\Ls & \dim (\Ls) & \alpha(\Ls) \\
\midrule
A_n,\,n\ge 1 & n(n+2) & \lfloor (\frac{n+1}{2})^2 \rfloor \\
\midrule
B_3 & 21  & 5  \\ 
\midrule
B_n,\, n\ge 4  & n(2n+1) & \frac{n(n-1)}{2}+1 \\ 
\midrule
C_n,\,n\ge 2 & n(2n+1) & \frac{n(n+1)}{2} \\ 
\midrule
D_n,\,n\ge 4 & n(2n-1) & \frac{n(n-1)}{2} \\ 
\midrule
G_2 & 14 & 3 \\ 
\midrule
F_4 & 52 & 9 \\ 
\midrule
E_6 & 78 & 16 \\ 
\midrule
E_7 & 133 & 27 \\ 
\midrule
E_8 & 248 & 36 \\ 
\bottomrule
\end{tabularx}
\end{table}
Furthermore Kostant found a relation of these invariants to discrete series representations 
of the corresponding Lie group, and to powers of the Euler product \cite{KO1,KO2}.
In fact, there are many more results concerning the invariants $\al$ and $\be$ for 
simple Lie algebras and their Borel subalgebras. \\
In this paper, we want to point out the following interesting result: if 
$\Lg$ is a solvable Lie algebra over an algebraically closed field of characteristic zero, 
then we have $\al(\Lg)=\be(\Lg)$. This means, given an abelian subalgebra of maximal 
dimension $m$ there exists also an abelian ideal of dimension $m$. \\
For a given value of $\al(\Lg)$ the dimension of $\Lg$ is bounded in terms of this
value, as we have mentioned before. Hence it is natural to ask what we can say
on $n$-dimensional Lie algebras $\Lg$ where the value of $\al(\Lg)$ is close to $n$.
Indeed, if $\al(\Lg)=n$, then $\Lg$ is abelian, $\al(\Lg)=\be(\Lg)$, and we are done. 
If $\al(\Lg)=n-1$, then also $\be(\Lg)=n-1$. This means that $\Lg$ has an abelian
ideal of codimension $1$ and is almost abelian. In particular, $\Lg$ is $2$-step solvable.
In this case the structure of $\Lg$, and even all its degenerations are quite 
well understood, see \cite{GOR}. \\
After these two easy cases it is reasonable to consider Lie algebras
$\Lg$ satisfying $\al(\Lg)=n-2$. Here we can classify all such non-solvable complex Lie algebras.
However, for solvable Lie algebras we cannot expect to obtain a classification, not even
in the nilpotent case. In fact, there exist even characteristically nilpotent Lie algebras
$\Lg$ with  $\al(\Lg)=n-2$. On the other hand we know that 
$\al(\Lg)=\be(\Lg)$, so that there is an abelian ideal of codimension $2$. 
For many problems concerning the cohomology of nilpotent Lie algebras
the subclass of those having an abelian ideal of codimension $1$ or $2$ is very
important, see \cite{ARM,POT} and the references given therein. \\
The structure of the paper is as follows. In section $2$ we prove basic 
results concerning the invariants $\al(\Lg)$ and $\be(\Lg)$. For a 
Levi decomposition $\Lg=\Ls\ltimes \Lr$ of $\Lg$ we show that 
$\al(\Ls\ltimes \Lr)\le \al(\Ls)+\al(\Lr)$. The main result of this section is, as
mentioned above, that $\al(\Lg)=\be(\Lg)$ for solvable Lie algebras over an algebraically
closed field of characteristic zero. An example is given that the statement is not true
in general for the field of real numbers. \\
In section $3$ we construct an abelian ideal of codimension $1$ for a Lie algebra
$\Lg$ satisfying $\al(\Lg)=n-1$. In section $4$ we show that Lie algebras $\Lg$ with
$\al(\Lg)=n-2$ are solvable or isomorphic to $\Ls\Ll_2(\C)\oplus \C^{\ell}$ for some $\ell\ge 0$.
In section $5$ we study nilpotent Lie algebras $\Lg$ with $\al(\Lg)=n-2$ and 
explicitly construct an abelian ideal of codimension $2$. Finally the invariants are 
computed for all complex nilpotent Lie algebras of dimension $n\le 7$. This is accompanied 
by a remark on Lie algebra degenerations, where these invariants are really useful.

\section{The invariants $\al(\Lg)$ and $\be (\Lg)$}

\begin{defi}
Let $\Lg$ be a Lie algebra of dimension $n$ over a field $K$. If not stated otherwise
we assume that $K$ is the field of complex numbers. 
Some results will also hold for other fields, but we are mainly interested in the case
of complex numbers.
Consider the following invariants of $\Lg$:
\begin{align*}
\al(\Lg) & = \max \{\dim (\La) \mid \La \text{ is an abelian subalgebra of }\Lg\},\\
\be(\Lg) & = \max \{\dim (\Lb) \mid \Lb \text{ is an abelian ideal of }\Lg\}.
\end{align*}
\end{defi}

An abelian subalgebra of maximal dimension is maximal abelian with respect to inclusion. 
However, a maximal abelian subalgebra need not be of maximal dimension:

\begin{ex}
Let $\Lf_n$ be the standard graded filiform nilpotent Lie algebra of dimension
$n$. Let $(e_1,\ldots ,e_n)$ be a standard basis, such that $[e_1,e_i]=e_{i+1}$ 
for $2\le i\le n-1$. Then $\La=\langle e_1,e_n\rangle$
is a maximal abelian subalgebra of dimension $2$, but $\al(\Lf_n)=\be(\Lf_n)=n-1$.
\end{ex}

Clearly we have $\be(\Lg)\le \al (\Lg)$. In general, the two invariants are
different. A complex semisimple Lie algebra $\Ls$ has no abelian ideals, hence
$\be(\Ls)=0$. We already saw in table $1$ that this is not true for the invariant 
$\al(\Ls)$. As mentioned before the following result holds, see \cite{SUT}:

\begin{prop}
Let $\Ls$ be a complex simple Lie algebra and $\Lb$ be a Borel subalgebra
of $\Ls$. Then the maximal dimension of an abelian ideal in $\Lb$ coincides
with the maximal dimension of a commutative subalgebra of $\Ls$, i.e.,
$\al(\Ls)=\be(\Lb)$. Furthermore the number of abelian ideals in $\Lb$ is
$2^{{\rm rank}(\Ls)}$.
\end{prop}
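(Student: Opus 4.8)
The plan is to reduce the statement to a purely combinatorial result about the root system and then cite the relevant literature; the author has announced it as a known proposition (due to Suter, and building on work of Kostant, Peterson, and Cellini--Papi), so a self-contained proof is not needed, but I would organize the argument as follows. First I would fix a Cartan subalgebra $\Lh\subset\Ls$, a set of positive roots $\Phi^+$, and the Borel $\Lb=\Lh\oplus\bigoplus_{\al\in\Phi^+}\Ls_\al$. The starting point is the elementary observation that any abelian ideal $\La$ of $\Lb$ is automatically $\Lh$-stable (since $[\Lh,\La]\subseteq\La$ and $\Lh$ acts semisimply), hence $\La=\bigoplus_{\al\in I}\Ls_\al$ for a subset $I\subseteq\Phi^+$; the ideal condition translates to $I$ being a \emph{dual order ideal} in the root poset (if $\al\in I$ and $\be\in\Phi^+$ with $\al+\be\in\Phi^+$, then $\al+\be\in I$), and the abelian condition says $\al+\be\notin\Phi$ whenever $\al,\be\in I$. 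Such $I$ are the so-called abelian ideals of $\Phi^+$.

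Next I would address the counting statement $\#\{\text{abelian ideals}\}=2^{\operatorname{rank}(\Ls)}$. The cleanest route is Peterson's theorem: abelian ideals of $\Lb$ are in bijection with elements $w$ of the affine Weyl group $\widehat W$ that are \emph{minimal} in their coset $w W$ (equivalently, the $2^n$ ``abelian'' minuscule-type elements), and a short argument with the affine Weyl group — looking at which walls of the fundamental alcove can be crossed while staying abelian — gives exactly $2^{\operatorname{rank}(\Ls)}$ such elements. Alternatively one can invoke the Cellini--Papi parametrization of ad-nilpotent ideals and restrict to the abelian ones. I would state this bijection, indicate that the $2^n$ count is the size of the relevant set of alcove-walks of length $\le 1$ in each fundamental direction, and refer to \cite{SUT} for the verification, since reproducing Peterson's count in full is exactly the step I expect to be the main obstacle to a short self-contained treatment.

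Finally, for the equality $\al(\Ls)=\be(\Lb)$: the inequality $\be(\Lb)\le\al(\Ls)$ is trivial since an abelian ideal of $\Lb$ is in particular an abelian subalgebra of $\Ls$. For the reverse inequality one uses the theorem of Malcev (as refined by Kostant): a commutative subalgebra of $\Ls$ of maximal dimension may be taken to be $\Ad(G)$-conjugate to one contained in the nilradical of $\Lb$, and in fact to one of the form $\bigoplus_{\al\in I}\Ls_\al$ with $I$ an abelian ideal of $\Phi^+$; hence $\al(\Ls)\le\be(\Lb)$. Combining the two inequalities yields the claimed equality. I would present this last part compactly, citing Malcev \cite{MAL} and Kostant \cite{KO1,KO2} for the fact that the maximal commutative subalgebra is achieved inside a Borel by a root-space sum, and \cite{SUT} for the identification with an abelian ideal; the genuinely delicate input is Kostant's observation that maximality forces the ``ideal'' property, which I would not re-prove here.
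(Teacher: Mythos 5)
Your outline is correct and matches the paper's treatment: the proposition is stated there without proof, with a citation to Suter \cite{SUT}, whose argument rests on Malcev, Kostant and Peterson exactly along the lines you describe (root-space decomposition of abelian ideals, Peterson's $2^{\mathrm{rank}}$ count via the affine Weyl group, and Kostant's identification of a maximal commutative subalgebra with an abelian ideal of $\Lb$). The only small point worth tightening is that $\Lh$-stability alone gives $\La=(\La\cap\Lh)\oplus\bigoplus_{\al\in I}\Ls_\al$; you need the ideal and abelian conditions together with $Z(\Ls)=0$ to conclude $\La\cap\Lh=0$ before writing $\La$ as a pure sum of root spaces.
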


This implies $\al(\Lb)=\be(\Lb)$, because we have
$\al(\Lb)\le \al(\Ls)=\be(\Lb)$, since $\al$ is monotone:

\begin{lem}\label{2.4}
The invariant $\al$ is monotone and additive: for a subalgebra $\Lh\le\Lg$ of $\Lg$ we have
$\al(\Lh)\le \al(\Lg)$, and for two Lie algebras $\La$ and $\Lb$
we have $\al(\La\oplus\Lb) =\al(\La)+\al(\Lb)$.
\end{lem}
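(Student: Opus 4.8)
The plan is to prove the two assertions—monotonicity and additivity—separately, both by elementary arguments.

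\smallskip

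\noindent\textbf{Monotonicity.} If $\Lh\le\Lg$ and $\La\subseteq\Lh$ is an abelian subalgebra of $\Lh$ realizing $\al(\Lh)$, then $\La$ is still an abelian subalgebra of $\Lg$, so $\dim(\La)\le\al(\Lg)$; taking the maximum over all such $\La$ gives $\al(\Lh)\le\al(\Lg)$. This is immediate from the definition and requires no real work.

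\smallskip

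\noindent\textbf{Additivity.} For the direct sum $\La\oplus\Lb$ (as Lie algebras, so $[\La,\Lb]=0$), the inequality $\al(\La\oplus\Lb)\ge\al(\La)+\al(\Lb)$ is clear: if $\Lc_1\subseteq\La$ and $\Lc_2\subseteq\Lb$ are abelian subalgebras of maximal dimension, then $\Lc_1\oplus\Lc_2$ is an abelian subalgebra of $\La\oplus\Lb$ of dimension $\al(\La)+\al(\Lb)$. The content is the reverse inequality. Here I would take an abelian subalgebra $\Lc\subseteq\La\oplus\Lb$ with $\dim(\Lc)=\al(\La\oplus\Lb)$ and consider the two projections $\pi_\La:\La\oplus\Lb\to\La$ and $\pi_\Lb:\La\oplus\Lb\to\Lb$, which are Lie algebra homomorphisms. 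The images $\pi_\La(\Lc)$ and $\pi_\Lb(\Lc)$ are abelian subalgebras of $\La$ and $\Lb$ respectively, hence $\dim\pi_\La(\Lc)\le\al(\La)$ and $\dim\pi_\Lb(\Lc)\le\al(\Lb)$. Since $\Lc$ embeds into $\pi_\La(\Lc)\oplus\pi_\Lb(\Lc)$ via $x\mapsto(\pi_\La(x),\pi_\Lb(x))$, this embedding being injective because $\ker\pi_\La\cap\ker\pi_\Lb=0$, we get
\[
\al(\La\oplus\Lb)=\dim(\Lc)\le\dim\pi_\La(\Lc)+\dim\pi_\Lb(\Lc)\le\al(\La)+\al(\Lb),
\]
which combined with the easy direction proves equality.

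\smallskip

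\noindent I do not anticipate a genuine obstacle: the whole statement is a direct consequence of the facts that subalgebras and homomorphic images of abelian subalgebras are abelian, together with the injectivity of the diagonal map into the product of the two projections. The only point requiring a moment's care is verifying that the map $x\mapsto(\pi_\La(x),\pi_\Lb(x))$ restricted to $\Lc$ has trivial kernel, which follows at once from $\La\cap\Lb=0$ inside $\La\oplus\Lb$; everything else is bookkeeping with dimensions.
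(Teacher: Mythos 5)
Your proof is correct and complete. The paper states Lemma~\ref{2.4} without any proof, treating it as immediate; your argument supplies exactly what is left implicit, and in particular the projection argument for the only nontrivial part, the inequality $\al(\La\oplus\Lb)\le\al(\La)+\al(\Lb)$ (images of an abelian subalgebra under the two projections are abelian subalgebras of the summands, and $\Lc\subseteq\pi_{\La}(\Lc)\oplus\pi_{\Lb}(\Lc)$ gives the dimension bound), is the standard one.
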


The invariant $\be$ need not be monotone. For example, consider a Cartan subalgebra 
$\Lh$ in $\Lg=\Ls\Ll_2(\C)$. Then $\be(\Lh)=1>0=\be(\Lg)$. We also have the following
result:

\begin{lem}\label{levi}
Let $\Lg$ be a complex Lie algebra with Levi decomposition $\Lg=\Ls\ltimes \Lr$.
Then $\al(\Ls\ltimes \Lr)\le \al(\Ls)+\al(\Lr)$.
\end{lem}

\begin{proof}
Let $\La$ be an abelian subalgebra in $\Lg$ of maximal dimension and 
$\pi\colon \Ls\ltimes \Lr \ra \Ls$, $(x,a)\ra (x,0)$ be the projection. Restricting
this homomorphism of Lie algebras to $\La$ yields
\[
\dim(\La)=\dim (\ker (\pi_{\La}))+\dim (\im (\pi_{\La})).
\]
Since $\im (\pi_{\La})$ is the homomorphic image of a subalgebra of $\Ls\ltimes \Lr$,
we may assume that  $\im (\pi_{\La})$ is an abelian subalgebra of $\Ls$. In particular we
have $\dim (\im (\pi_{\La}))\le \al(\Ls)$. Furthermore we have $\ker (\pi_{\La})=\La\cap \Lr$.
Hence  $\ker (\pi_{\La})$ is an abelian subalgebra of $\Lr$ and we have 
$\dim (\ker (\pi_{\La}))\le \al(\Lr)$. Finally we obtain
\begin{align*}
\al(\Ls\ltimes \Lr) & =\dim (\La) \\
 & = \dim (\ker (\pi_{\La}))+\dim (\im (\pi_{\La})) \\
 & \le \al(\Ls)+\al(\Lr).
\end{align*}
\end{proof}

The fact that $\al(\Lb)=\be(\Lb)$ for a Borel subalgebra $\Lb$ of a complex simple
Lie algebra can be generalized to all complex solvable Lie algebras. 

\begin{prop}\label{2.5}
Let $\Lg$ be a solvable Lie algebra over an algebraically closed field $K$ of
characteristic zero. Then $\be(\Lg)=\al(\Lg)$.
\end{prop}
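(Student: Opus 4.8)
The plan is to prove the nontrivial inequality $\al(\Lg)\le\be(\Lg)$ by taking an abelian subalgebra $\La\le\Lg$ with $\dim\La=\al(\Lg)$ and producing from it an abelian ideal of at least the same dimension. Since $\be(\Lg)\le\al(\Lg)$ always holds, this suffices. The natural strategy is induction on $\dim\Lg$, the case $\dim\Lg\le 1$ being trivial. For the inductive step I would exploit solvability over an algebraically closed field: the derived algebra $[\Lg,\Lg]$ is a proper ideal (if $\Lg\ne 0$), and by Lie's theorem $\Lg$ has a one-dimensional ideal, or more usefully, $\Lg$ acts on itself by $\ad$ with a common eigenvector, so $\Lg$ contains a nonzero central element of some quotient. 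The cleanest device is to pick a codimension-one ideal $\Lh\lhd\Lg$ containing $[\Lg,\Lg]$; then $\Lh$ is again solvable, and every ideal of $\Lh$ that is stable under a chosen complementary derivation extends to an ideal of $\Lg$.

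First I would set up the induction: let $\La$ be abelian of maximal dimension. If $\La=\Lg$ we are done. Otherwise choose a codimension-one ideal $\Lh$ with $\La\not\le\Lh$ is impossible in general, so instead I would argue as follows. Consider the ideal $\Li=\sum_{x\in\Lg}[x,\Lz(\Lg)]$-type constructions are too weak; the better route is the classical one for abelian ideals in solvable Lie algebras. Let $\Lb$ be an abelian ideal of maximal dimension $\be(\Lg)$, and suppose for contradiction $\be(\Lg)<\al(\Lg)$. The key step is: the centralizer $\Lc=C_\Lg(\Lb)$ is an ideal containing $\Lb$, and $\Lb$ is a $\Lg$-submodule on which $\Lg$ acts; if $\Lb$ were not self-centralizing in a suitable sense, one can enlarge it. Concretely, I would show $\Lb=C_{\Lc}(\Lb)\cap(\text{something})$ and then, using that $\Lc/\Lb$ is a nonzero solvable Lie algebra acting on $\Lb$, apply Lie's theorem to find a common eigenvector: there is $z\in\Lc\setminus\Lb$ and a linear functional $\la$ with $[y,z]\equiv\la(y)z\pmod{\Lb}$ for all $y\in\Lg$. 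Then $\Lb+\K z$ is an ideal, and one checks it can be modified to an abelian ideal strictly larger than $\Lb$ unless $\la=0$ and $z$ commutes with $\Lb$ — but then $z$ is central modulo $\Lb$ and again enlarges $\Lb$, contradicting maximality. This shows $C_\Lg(\Lb)=\Lb$.

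From $C_\Lg(\Lb)=\Lb$ I would derive the dimension bound. Since $\La$ is abelian, $\La\cap C_\Lg(\Lb)=\La\cap\Lb$ and the map $\La\to\Hom(\Lb,\Lg/\Lb)$, $a\mapsto(\text{ad}_a\text{ restricted to }\Lb)$, has kernel exactly $\La\cap C_\Lg(\Lb)=\La\cap\Lb\le\Lb$. Now because $\Lb$ is abelian the image of $\ad_a$ on $\Lb$ lands in $\Lb$'s complement only trivially — more precisely $[\La,\Lb]\subseteq\Lb$ but $[a,a']=0$ for $a,a'\in\La$, so $\La$ acts on $\Lb$ by a \emph{commuting} family of nilpotent-modulo-scalar operators; using Lie's theorem and the abelian relations one shows the image of $\La$ in $\End(\Lb)$ is itself abelian of dimension $\le\dim\Lb-\dim(\La\cap\Lb)$, hence $\dim\La-\dim(\La\cap\Lb)\le\dim\Lb-\dim(\La\cap\Lb)$, giving $\al(\Lg)=\dim\La\le\dim\Lb=\be(\Lg)$. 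The main obstacle is making the enlargement argument in the previous paragraph airtight: one must show that whenever $C_\Lg(\Lb)\ne\Lb$ the ideal $\Lb$ can genuinely be replaced by a \emph{strictly larger abelian} ideal, which requires carefully handling the eigenvalue $\la$ (passing to the kernel of $\la$, which is a codimension-$\le 1$ ideal) and checking that the new generator really commutes with all of $\Lb$; this is where algebraic closedness is essential, via Lie's theorem.
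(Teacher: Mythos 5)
Your approach is entirely different from the paper's, which is geometric: for $m=\al(\Lg)$ the set of $m$-dimensional abelian subalgebras is a closed, hence complete, subvariety of the Grassmannian $Gr(\Lg,m)$, the adjoint algebraic group of $\Lg$ is connected and solvable, and Borel's fixed point theorem produces a fixed point, i.e.\ an abelian ideal of dimension $m$. Your first step --- that a maximal abelian ideal $\Lb$ satisfies $C_{\Lg}(\Lb)=\Lb$ --- is correct, and in fact simpler than you make it: any $z$ spanning a one-dimensional $\Lg$-submodule of $C_{\Lg}(\Lb)/\Lb$ (which exists by Lie's theorem) already yields the strictly larger abelian ideal $\Lb+Kz$, with no case distinction on the eigenvalue $\la$ needed, since $z$ centralizes $\Lb$ by construction.

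The gap is in your second step. From $C_{\Lg}(\Lb)=\Lb$ you obtain an injection of $\La/(\La\cap\Lb)$ into $\End(\Lb)$ with commutative image, and you then assert that this image has dimension at most $\dim\Lb-\dim(\La\cap\Lb)$. That bound is false: by Schur's theorem a commutative subalgebra of $\End(V)$ can have dimension $\lfloor (\dim V)^2/4\rfloor+1$, and this is realized in exactly your setting. Take $\Lb=V_1\oplus V_2$ abelian with $\dim V_i=k$, let $\La\cong\Hom(V_2,V_1)$ act on $\Lb$ by sending $V_2$ to $V_1$ and annihilating $V_1$, and set $\Lg=\La\ltimes\Lb$. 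Then $\Lg$ is $2$-step nilpotent (hence solvable), $\Lb$ is an abelian ideal with $C_{\Lg}(\Lb)=\Lb$ because the action is faithful, and $\La\cap\Lb=0$, yet the image of $\La$ in $\End(\Lb)$ has dimension $k^{2}$, far exceeding $\dim\Lb=2k$. Of course this $\Lb$ is not of maximal dimension ($\La\oplus V_1$ is a larger abelian ideal), but your dimension count uses nothing about $\Lb$ beyond self-centralization, so it cannot yield $\dim\La\le\dim\Lb$. Some input of a different nature is required --- the paper gets it from completeness of the Grassmannian and Borel's fixed point theorem --- and I do not see how to repair the purely linear-algebraic counting you propose.
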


\begin{proof}
The result follows easily from the proof of Theorem $4.1$ of \cite{ELO}. 
For the convenience of the reader we give the details.
Let $G$ be the adjoint algebraic group of $\Lg$. This is the smallest
algebraic subgroup of $\Aut(\Lg)$ such that its Lie algebra $\Lie (G)$ contains
$\ad (\Lg)$. Then $\Lie (G)$ is the algebraic hull of $\ad (\Lg)$. Since
$\ad (\Lg)$ is solvable, so is  $\Lie (G)$. Therefore $G$ is a connected solvable
algebraic group. Let $m=\al(\Lg)$. Consider the set $\CC$ of all commutative
subalgebras of $\Lg$ of dimension $m$. This is, by assumtion, a non-empty set,
which can be considered as a subset of the Grassmannian $Gr(\Lg,m)$, which is
an irreducible complete algebraic variety. Hence $\CC$ is a non-empty complete
variety, and $G$ operates morphically on it, mapping each commutative subalgebra
$\Lh$ on $g(\Lh)$, for $g\in G$. By Borel's fixed point theorem, $G$ has a fixed 
point $I$ in $\CC$, i.e., a subalgebra $I$ of $\Lg$ with $g(I)=I$ for all
$g\in G$. In particular we have $\ad(x)(I)\subseteq I$ for all $x\in \Lg$. 
Hence $I$ is an abelian ideal of dimension $m$ of $\Lg$.
\end{proof}

Borel's fixed point theorem relies on the closed orbit lemma. As a corollary
one can also obtain the theorem of Lie-Kolchin. We note that the assumptions on
$K$ are really necessary. The next example shows that we need the field to be
algebraically closed.

\begin{ex}
Let $\Lg$ be the solvable Lie algebra of dimension $4$ over $\R$ defined by
\begin{align*}
[x_1,x_2]& =x_2-x_3,\quad [x_1,x_4]=2x_4, \\
[x_1,x_3]& =x_2+x_3,\quad [x_2,x_3]=x_4 \\
\end{align*}
Then, over $\R$, we have $\al(\Lg)=2$, but $\be(\Lg)=1$.
\end{ex}

Let $\K$ be equal to $\R$ or $\C$.
Obviously, $\langle x_3,x_4\rangle$ is an abelian subalgebra of dimension $2$ over
$\K$. Assume that $\al(\Lg)=3$. Then $\Lg$ is almost abelian, hence
$2$-step solvable. This is impossible, as $\Lg$ is $3$-step solvable. Hence $\al(\Lg)=2$
over $\K$. \\
Assume that $I$ is a $2$-dimensional abelian ideal over $\K$. It is easy to see that we
can represent $I$ as $\langle \al x_2+\be x_3,x_4\rangle$ with $\al,\be \in \K$.
Obviously neither $x_2$ nor $x_3$ can belong to $I$. Hence $\al\neq 0$ and $\be \neq 0$. 
We have $\al x_2+\be x_3\in I$ and $[x_1,\al x_2+\be x_3]=(\al+\be)x_2+(\be-\al)x_3\in I$.
This implies $(\al^2+\be^2)x_3\in I$, hence $\al^2+\be^2=0$.
This is a contradiction over $\R$, so that $\be(\Lg)=1$ in this case.
Over $\C$ we may take $\al=1$ and $\be=i$, and  $I=\langle x_2+ix_3,x_4\rangle$ 
is a  $2$-dimensional abelian ideal. \\[0.2cm]

The next two lemma's are well known. We state them just for further reference.

\begin{lem}\label{mub}
Let $\Lg$ be a complex, non-abelian, nilpotent Lie algebra of dimension $n$. Then 
\[
\frac{\sqrt{8n+1}-1}{2}\le \al(\Lg)\le n-1
\]
\end{lem}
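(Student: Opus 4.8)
\emph{Overall plan.} The bound consists of two independent estimates. The upper bound I would settle in one line: an abelian subalgebra of $\Lg$ of dimension $n=\dim\Lg$ would be $\Lg$ itself, contradicting that $\Lg$ is non-abelian, so $\al(\Lg)\le n-1$ (nilpotency plays no role here). For the lower bound I would set $\ell:=\al(\Lg)$ and quote the estimate $n\le\frac{\ell(\ell+1)}{2}$ for nilpotent Lie algebras recorded in the introduction after \cite{STE,RIL}. Granting it, the lemma becomes elementary: $2n\le\ell^2+\ell$ rearranges to $\ell^2+\ell-2n\ge 0$, and since $\ell\ge 1$ while the unique positive root of $t\mapsto t^2+t-2n$ is $\frac{\sqrt{8n+1}-1}{2}$, the upward-opening parabola forces $\ell\ge\frac{\sqrt{8n+1}-1}{2}$, which is exactly the claim.

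\emph{Proving the auxiliary bound.} To keep the argument self-contained I would derive $n\le\frac{\ell(\ell+1)}{2}$ as follows. By Proposition~\ref{2.5} --- applicable because $\C$ is algebraically closed and a nilpotent Lie algebra is solvable --- $\Lg$ contains an abelian ideal $\La$ with $\dim\La=\be(\Lg)=\al(\Lg)=\ell$, necessarily of maximal dimension among abelian ideals. Such an $\La$ is self-centralizing: $C_\Lg(\La)$ is an ideal of $\Lg$ containing $\La$, so if it were strictly larger then $C_\Lg(\La)/\La$ would be a nonzero ideal of the nilpotent quotient $\Lg/\La$ and hence meet $Z(\Lg/\La)$; lifting a nonzero element $\bar x$ of that intersection to $x\in C_\Lg(\La)\setminus\La$ produces $\La+\langle x\rangle$, which is at once abelian (since $[x,\La]=0$) and an ideal (since $[\Lg,x]\subseteq\La$), of dimension $\ell+1$ --- contradicting maximality. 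Hence $C_\Lg(\La)=\La$, so $y\mapsto\ad(y)|_\La$ induces an injection $\Lg/\La\hookrightarrow\mathfrak{gl}(\La)$ whose image consists of nilpotent operators because $\Lg$ is nilpotent; by Engel's theorem this image can be conjugated into the strictly upper triangular matrices, so $\dim(\Lg/\La)\le\binom{\ell}{2}$. Adding $\dim\La=\ell$ gives $n\le\ell+\binom{\ell}{2}=\frac{\ell(\ell+1)}{2}$.

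\emph{Where the difficulty lies.} There is no serious obstacle once Proposition~\ref{2.5} and Engel's theorem are available. The only steps demanding a little care are the self-centralizing property --- which genuinely uses nilpotency, via ``a nonzero ideal of a nilpotent Lie algebra meets the center'', and where one must verify that $\La+\langle x\rangle$ is simultaneously abelian and an ideal --- and, on the arithmetic side, keeping the direction of the inequality straight when passing from the dimension bound to the estimate on $\ell$.
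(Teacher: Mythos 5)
Your proof is correct and follows the same route as the paper: the paper's proof is exactly ``the bound is known for $\be(\Lg)$ by \cite{ELO}, and Proposition~\ref{2.5} transfers it to $\al(\Lg)$,'' which is what you do, except that you additionally supply a complete and accurate proof of the cited inequality $n\le\frac{\ell(\ell+1)}{2}$ (maximal abelian ideal is self-centralizing, then Engel on the faithful nilpotent action of $\Lg/\La$ on $\La$). The extra self-contained argument is sound and is precisely the standard proof behind the \cite{ELO} estimate, so there is nothing to correct.
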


\begin{proof}
The estimate is given in \cite{ELO} for $\be(\Lg)$. It also holds for
$\al(\Lg)$ since $\al(\Lg)\ge \be(\Lg)$. 
\end{proof}

\begin{lem}\label{2.8}
The center $Z(\Lg)$ of $\Lg$ is contained in any 
abelian subalgebra of maximal dimension.
\end{lem}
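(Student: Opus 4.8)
The plan is to enlarge a given maximal abelian subalgebra by the center and observe that nothing changes. Concretely, I would start by fixing an abelian subalgebra $\La\le\Lg$ with $\dim(\La)=\al(\Lg)$, and then consider the subspace $\La+Z(\Lg)$. Since every element of $Z(\Lg)$ brackets trivially with every element of $\Lg$, this subspace is closed under the bracket — indeed it is a subalgebra, being the sum of the subalgebra $\La$ and the ideal $Z(\Lg)$.

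The next step is to check that $\La+Z(\Lg)$ is again abelian. Writing a general pair of elements as $a_1+z_1$ and $a_2+z_2$ with $a_i\in\La$ and $z_i\in Z(\Lg)$, the bracket $[a_1+z_1,a_2+z_2]$ expands into four terms: $[a_1,a_2]$ vanishes because $\La$ is abelian, and the remaining three terms vanish because $z_1,z_2$ are central. Hence $\La+Z(\Lg)$ is an abelian subalgebra of $\Lg$ containing $\La$.

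Finally, I would invoke the maximality of $\dim(\La)$: since $\La\subseteq\La+Z(\Lg)$ and the latter is an abelian subalgebra, we must have $\dim(\La+Z(\Lg))\le\dim(\La)$, forcing $\La+Z(\Lg)=\La$, that is, $Z(\Lg)\subseteq\La$. Since $\La$ was an arbitrary abelian subalgebra of maximal dimension, this yields the claim.

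There is essentially no obstacle in this argument; the only point that needs verification is that $\La+Z(\Lg)$ stays abelian, and this is immediate from the definition of the center. One could add the remark that the same reasoning shows a bit more — namely that an abelian subalgebra of maximal dimension is always self-centralizing — but that stronger statement is not required for the lemma as stated.
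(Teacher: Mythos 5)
Your argument is correct and is essentially the paper's: the paper cites the fact that a maximal-dimensional abelian subalgebra $\La$ is self-centralizing (i.e.\ $\La=Z_{\Lg}(\La)$) and notes $Z(\Lg)\subset Z_{\Lg}(\La)$, while you prove the needed special case directly by observing that $\La+Z(\Lg)$ is still abelian and invoking maximality of $\dim(\La)$. The stronger self-centralizing statement you mention at the end is exactly the lemma the paper appeals to, so the two proofs rest on the same mechanism.
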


\begin{proof}
An abelian subalgebra $\La$ of maximal dimension is self-centralizing, 
i.e., $\La=Z_{\Lg}(\La)=\{x\in \Lg\mid [x,\La]=0 \}$. 
Since $Z(\Lg)\subset Z_{\Lg}(\La)$, the claim follows.
\end{proof}

\section{Abelian subalgebras of codimension $1$}

Let $\Lg$ be a Lie algebra satisfying $\al(\Lg)=n-1$. Such a Lie algebra is
$2$-step solvable, and their structure is well known (see \cite{GOR}, section $3$).
We will show that $\be(\Lg)=n-1$ without using proposition $\ref{2.5}$. 
Our proof will be constructive. We do not only show the existence of an abelian ideal 
of dimension $n-1$, but really construct such an ideal from a given abelian subalgebra of dimension
$n-1$. Note that Lie algebras $\Lg$ with $\be(\Lg)=n-1$ are called
{\it almost abelian}.

\begin{prop}
Let $\Lg$ be a $n$-dimensional Lie algebra over a field of characteristic zero 
satisfying $\al(\Lg)=n-1$. Then we have $\be(\Lg)=n-1$ and an ideal of 
codimension $1$ can be constructed explicitly.
\end{prop}

\begin{proof}
Let $\La$ be an abelian subalgebra of dimension $n-1$. If $[\Lg,\Lg]\subseteq \La$,
then $\La$ is also an abelian ideal, and we are done. Otherwise we choose a basis
$(e_1,\ldots ,e_n)$ for $\Lg$ such that $\La=\langle e_2,\ldots ,e_n \rangle$.
We have $[e_j,e_{\ell}]=0$ for all $j,\ell \ge 2$.
There exists a $k\ge 2$ such that $[e_1,e_k]$ is not contained in $\La$.
We may assume that $k=2$ by relabelling $e_2$ and $e_k$. 
For $j\ge 2$ let
\[
[e_1,e_j]=\al_{j1}e_1+\al_{j2}e_2+\cdots + \al_{jn}e_n.
\]
We have $\al_{21}\neq 0$. Rescaling $e_1$ we may assume that $\al_{21}=1$.
Using the Jacobi identity we have for all $j\ge 2$
\begin{align*}
0 & = [e_1,[e_2,e_j]] \\
  & = -[e_2,[e_j,e_1]]-[e_j,[e_1,e_2]] \\
  & = -\al_{j1}[e_1,e_2]+[e_1,e_j]
\end{align*}
This implies $[e_1,e_j]=\al_{j1}[e_1,e_2]$ and $[e_1,\al_{j1}e_2-e_j]=0$ for all $j\ge 2$. 
Let $v_j=\al_{j1}e_2-e_j$. Note that all $v_j$ lie in the center of $\Lg$, and that 
the derived subalgebra $[\Lg,\Lg]$ is $1$-dimensional, generated by $[e_1,e_2]$.
Now define
\[
I:=\langle [e_1,e_2], v_3,\ldots ,v_n\rangle.
\]
This is an abelian subalgebra of dimension $n-1$ which contains the derived
subalgebra $[\Lg,\Lg]$. Hence $I$ is an abelian ideal of maximal dimension
$n-1$, and we have $\be(\Lg)=n-1$.
\end{proof}

\section{Abelian subalgebras of codimension $2$}

Let $\Lg$ be a complex Lie algebra of dimension $n$ satisfying $\al(\Lg)=n-2$. 
We will show that $\Lg$ must be solvable except for the cases
$\Ls\Ll_2(\C)\oplus \C^{\ell}$, for $\ell=n-3\ge 0$. We use the convention that
the Lie algebra $\Ls\Ll_2(\C)$ is included in this family, for $\ell=0$.

\begin{prop}
Let $\Lg$ be a complex Lie algebra with $\dim (\Lg)=n$ and $\al(\Lg)=n-2$. Then either $\Lg$ is 
isomorphic to one of the Lie algebras $\Ls\Ll_2(\C)\oplus \C^{\ell}$, or $\Lg$ is
a solvable Lie algebra. 
\end{prop}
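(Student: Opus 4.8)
The plan is to assume that $\Lg$ is \emph{not} solvable and then to prove $\Lg\cong\Ls\Ll_2(\C)\oplus\C^\ell$ for some $\ell\ge 0$. Fix a Levi decomposition $\Lg=\Ls\ltimes\Lr$, where $\Ls\neq 0$ is a semisimple Levi subalgebra and $\Lr$ is the radical; put $d=\dim\Ls$ and $r=\dim\Lr$, so $n=d+r$. Choose an abelian subalgebra $\La\le\Lg$ of maximal dimension $n-2$ and let $\pi\colon\Lg\to\Lg/\Lr\cong\Ls$ be the canonical projection. The decisive step is the estimate
\[
n-2=\dim\La=\dim(\La\cap\Lr)+\dim\pi(\La)\le\al(\Lr)+\al(\Ls).
\]
Here the middle equality comes from the exact sequence $0\to\La\cap\Lr\to\La\to\pi(\La)\to 0$, and the inequality holds because $\La\cap\Lr$ is an abelian subalgebra of $\Lr$, while $\pi(\La)$, being the homomorphic image of an abelian subalgebra, is an abelian subalgebra of $\Ls$.

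Next I would feed in two cheap upper bounds. Trivially $\al(\Lr)\le r$. For the Levi part, write $\Ls=\Ls_1\oplus\cdots\oplus\Ls_k$ as a sum of simple ideals; by Lemma~\ref{2.4}, $\al(\Ls)=\sum_i\al(\Ls_i)$, and each simple summand satisfies $\al(\Ls_i)\le\dim\Ls_i-2$ --- a simple Lie algebra is neither abelian nor almost abelian, and in any case this can be read off Table~$1$ --- so $\al(\Ls)\le d-2k\le d-2$. Substituting into the displayed chain gives $n-2\le\al(\Lr)+\al(\Ls)\le r+(d-2)=n-2$, so every inequality above is an equality. Reading off the equality conditions one gets: (i) $\al(\Lr)=r$, hence $\Lr$ is abelian; (ii) $\dim(\La\cap\Lr)=r$, hence $\Lr\subseteq\La$; (iii) $k=1$ and $\al(\Ls)=d-2$, which by Table~$1$ (the unique simple Lie algebra with $\dim-\al=2$ is $\Ls\Ll_2(\C)$) forces $\Ls\cong\Ls\Ll_2(\C)$; and (iv) $\dim\pi(\La)=1$.

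The final step is to upgrade the semidirect product to a direct sum. Pick $v\in\La$ with $\pi(v)\neq 0$ and decompose $v=s+x$ with $s\in\Ls$ and $x\in\Lr$; then $s\neq 0$. Since $\Lr\subseteq\La$ and $\La$ is abelian, $[v,\Lr]=0$, and because $\Lr$ is abelian this yields $[s,\Lr]=[v,\Lr]=0$, i.e.\ $\ad(s)$ annihilates $\Lr$. The set of elements of $\Ls$ annihilating $\Lr$ is an ideal of $\Ls\cong\Ls\Ll_2(\C)$ which contains the nonzero element $s$, hence equals $\Ls$; therefore $[\Ls,\Lr]=0$. Combined with $[\Lr,\Lr]=0$, this means that the bracket of $\Lg=\Ls\oplus\Lr$ coincides with that of the direct sum, so $\Lg\cong\Ls\Ll_2(\C)\oplus\C^r$, as claimed. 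I expect the only real subtlety to be the bookkeeping of the equality case: the entire argument rests on the displayed chain of inequalities collapsing, and one must carefully harvest all of (i)--(iv) simultaneously. The sole external ingredient is Table~$1$ (Malcev's determination of $\al$ for simple Lie algebras), used once to bound $\al(\Ls)$ from above and once to pin down the equality case.
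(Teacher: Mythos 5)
Your proof is correct and follows essentially the same route as the paper: Levi decomposition, the bound $\al(\Lg)\le\al(\Ls)+\al(\Lr)\le(\dim\Ls-2)+\dim\Lr=n-2$, harvesting the equality case to get $\Ls\cong\Ls\Ll_2(\C)$ and $\Lr$ abelian, and then killing the action by simplicity of $\Ls\Ll_2(\C)$. The only (welcome) difference is that you justify the subadditivity via the projection $\pi\colon\Lg\to\Lg/\Lr$ and the exact sequence, which is cleaner than the paper's bare assertion that an abelian subalgebra of $\Ls\ltimes\Lr$ is one of $\Ls\oplus\Lr$, and you make explicit the step $\Lr\subseteq\La$ that the paper leaves implicit.
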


\begin{proof}
Let $\Lg=\Ls\ltimes \Lr$ be a Levi decomposition, where $\Lr$ denotes the
solvable radical of $\Lg$. For a semisimple Levi subalgebra $\Ls$ we have
\[
\al(\Ls)\le \dim (\Ls)-2,
\] 
where equality holds if and only if $\Ls$ is $\Ls\Ll_2(\C)$. This follows from
table $1$ and lemma $2.4$. By lemma $\ref{levi}$ we have
$\al(\Ls\ltimes \Lr)\le \al(\Ls)+\al(\Lr)$. 
Assume that $\Ls\neq 0$. Then it follows that
\begin{align*}
\al(\Lg) & \le \al(\Ls)+\al(\Lr) \\
         & \le \dim(\Ls)-2+\dim(\Lr) \\
         & = n-2.
\end{align*}
Since we must have equality, it follows that $\Ls$ is isomorphic to $\Ls\Ll_2(\C)$,
and $\al(\Lr)=\dim(\Lr)$. Therefore $\Lr$ is abelian and 
$\Lg\simeq \Ls\Ll_2(\C)\ltimes_{\phi}\C^{\ell}$ with a homomorphism
$\phi\colon \Ls\Ll_2(\C) \ra \Der (\C^{\ell})$. This Lie algebra contains an abelian
subalgebra of codimension $2$ if and only if $\phi$ is trivial. Indeed, the Lie bracket
is given by $[(x,a),(y,b)]=([x,y], \phi(x)b-\phi(y)a)$, for $x,y\in \Ls\Ll_2(\C)$ and
$a,b\in \C^{\ell}$. Since there is an abelian subalgebra of codimension $2$,
there must be a nonzero element $(x,0)$ commuting with all elements $(0,b)$, i.e.,
$(0,0)=[(x,0),(0,b)]=(0,\phi(x)b)$ for all $b\in \C^{\ell}$.
It follows that $\ker(\phi)$ is non-trivial. Since  $\Ls\Ll_2(\C)$ is simple, $\phi=0$. \\
In the other remaining case we have $\Ls=0$. In that case, $\Lg$ is solvable.
\end{proof}

It is easy to classify such Lie algebras in low dimensions.

\begin{prop}
Let $\Lg$ be a complex Lie algebra of dimension $n$ and $\al(\Lg)=n-2$.
\begin{itemize}
\item[(1)] For $n=3$ it follows $\Lg\simeq \Ls\Ll_2(\C)$.
\item[(2)] For $n=4$, $\Lg$ is isomorphic to one of the following Lie algebras: 

\vspace*{0.5cm}
\begin{center}
\begin{tabular}{l|l}
$\Lg$ & Lie brackets \\[0.1cm]
\hline
 & \\[-0.35cm]
$\Lg_1=\Lr_2(\C) \oplus \Lr_2(\C)$ & $[e_1,e_2]=e_2, \,[e_3,e_4]=e_4$ \\[0.1cm]
$\Lg_2=\Ls\Ll_2(\C) \oplus \C$ & $[e_1,e_2]=e_2, \,[e_1,e_3]=-e_3,\,[e_2,e_3]=e_1$\\[0.1cm]
$\Lg_3$ & $[e_1,e_2]=e_2, \,[e_1,e_3]=e_3,\,[e_1,e_4]=2e_4,\,[e_2,e_3]=e_4$\\[0.1cm]
$\Lg_4(\al),\,\al\in \C$ & $[e_1,e_2]=e_2, \,[e_1,e_3]=e_2+\al e_3,\,
[e_1,e_4]=(\al +1)e_4,\,[e_2,e_3]=e_4$
\end{tabular}
\end{center}
\end{itemize}
\end{prop}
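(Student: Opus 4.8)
The overall strategy is to split into the cases $n=3$ and $n=4$, and in each case use the classification results already available together with the structural information from the previous proposition. For $n=3$: if $\al(\Lg)=1$, then $\Lg$ has no abelian subalgebra of dimension $\ge 2$, so in particular every two-dimensional subspace fails to be a subalgebra or fails to be abelian; but any $3$-dimensional Lie algebra has an abelian subalgebra of dimension at least $1$, and if it is solvable it has one of dimension $2$ (by Lie's theorem the derived algebra is properly contained, giving an ideal, and one checks $\be\ge 2$ unless... ). More efficiently, by the previous proposition either $\Lg\simeq\Ls\Ll_2(\C)\oplus\C^\ell$ with $\ell=0$, which for $n=3$ forces $\Lg\simeq\Ls\Ll_2(\C)$, or $\Lg$ is solvable; and a solvable Lie algebra of dimension $3$ always satisfies $\al(\Lg)\ge 2=n-1$ (it is never true that $\al=1$ for solvable $\dim\ge 2$, since $[\Lg,\Lg]\neq\Lg$ gives an abelian subalgebra of dimension $\ge 2$ by induction, or one invokes Lemma \ref{mub} in the nilpotent case and a direct check otherwise). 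Hence only $\Ls\Ll_2(\C)$ survives, proving (1).

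For $n=4$ with $\al(\Lg)=2$: again by the previous proposition, either $\Lg\simeq\Ls\Ll_2(\C)\oplus\C^\ell$ and dimension $4$ forces $\ell=1$, giving $\Lg_2$, or $\Lg$ is solvable. In the solvable case I would separate nilpotent from non-nilpotent. If $\Lg$ is nilpotent of dimension $4$ with $\al(\Lg)=2$, Lemma \ref{mub} gives $\frac{\sqrt{33}-1}{2}\approx 2.37\le\al(\Lg)$, contradicting $\al(\Lg)=2$; so there is no nilpotent example in dimension $4$. Thus the remaining $\Lg$ are non-nilpotent solvable of dimension $4$ with $\al(\Lg)=2$. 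Now one runs through the (well-known) classification of $4$-dimensional solvable Lie algebras over $\C$, discarding those with an abelian subalgebra of dimension $3$ (i.e. the almost abelian ones, and any others with $\al=3$) and those with $\al\le 1$ (there are none of dimension $\ge 2$). What is left should be exactly $\Lg_1$, $\Lg_3$, and the family $\Lg_4(\al)$: one verifies directly that each has $\al=2$ (exhibit an abelian $2$-subalgebra and show no abelian $3$-subalgebra exists, using that a $3$-dimensional abelian subalgebra would be an ideal of codimension $1$ hence $\Lg$ almost abelian, which these are not — e.g. $\Lg_3$ and $\Lg_4(\al)$ are $3$-step solvable, and $\Lg_1$ is a direct sum of two non-abelian pieces each contributing $\al=1$ by Lemma \ref{2.4}).

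The main obstacle is the bookkeeping in the $n=4$ solvable case: one must be sure the list is exhaustive, which means either citing a specific classification of complex $4$-dimensional solvable Lie algebras and matching names, or giving a self-contained argument. I would structure the self-contained version around the derived series: $\dim[\Lg,\Lg]\in\{1,2,3\}$. The case $\dim[\Lg,\Lg]=1$ combined with $\al=n-2$ is restrictive; $\dim[\Lg,\Lg]=2$ with $[\Lg,\Lg]$ abelian gives a family parametrized by the action of a $2$-dimensional complement, from which $\Lg_1$, $\Lg_3$, $\Lg_4(\al)$ emerge after normalizing the action (here is where the isomorphism reductions, e.g. rescaling and the Jordan form of the adjoint action of $e_1$, do the real work); $\dim[\Lg,\Lg]=3$ forces $[\Lg,\Lg]$ to be a $3$-dimensional nilpotent ideal, and combined with $\al(\Lg)=2$ one checks this is incompatible (the nilradical would be too large or would itself contain an abelian subalgebra of dimension $3$). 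Throughout, the recurring tool is Lemma \ref{2.4} (monotonicity and additivity of $\al$) to handle decomposable cases and to bound $\al$ from below via subalgebras, together with the observation that $\al(\Lg)=n-1$ is equivalent to almost abelian, so ruling out $\al=n-1$ amounts to checking $\Lg$ is not $2$-step solvable.
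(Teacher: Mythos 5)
Your primary route --- reduce to the solvable case via the Levi-decomposition proposition, kill the nilpotent case with Lemma \ref{mub}, and then check $\al$ against a known classification of $4$-dimensional solvable complex Lie algebras --- is exactly what the paper does; its proof consists of the single sentence that the verification is straightforward from such a classification (plus the remark on when $\Lg_4(\al)\simeq\Lg_4(\be)$). Your $n=3$ argument and the verification that $\Lg_1,\Lg_2,\Lg_3,\Lg_4(\al)$ each have $\al=2$ are sound.

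However, the self-contained fallback you sketch for exhaustiveness --- the case split on $\dim[\Lg,\Lg]$ --- contains a concrete error that would derail it if executed. You assert that $\Lg_1$, $\Lg_3$ and $\Lg_4(\al)$ all emerge from the case $\dim[\Lg,\Lg]=2$ with abelian derived algebra, and that $\dim[\Lg,\Lg]=3$ is incompatible with $\al(\Lg)=2$. In fact, for $\Lg_3$ one has $[\Lg,\Lg]=\langle e_2,e_3,e_4\rangle\cong\Ln_3$ (the Heisenberg algebra), and likewise for $\Lg_4(\al)$ with $\al\neq 0$; only $\Lg_1$ and $\Lg_4(0)$ have a $2$-dimensional derived algebra. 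Your proposed reason for excluding $\dim[\Lg,\Lg]=3$ --- that the derived algebra ``would itself contain an abelian subalgebra of dimension $3$'' --- fails precisely because $\al(\Ln_3)=2$. So the $\dim[\Lg,\Lg]=3$ branch is where most of the list actually lives and must be worked out, not discarded. A second, smaller slip: you propose to rule out $\al=3$ by checking the algebra is not $2$-step solvable, but $\Lg_4(0)$ (where $[e_1,e_3]=e_2$ and $[\Lg,\Lg]=\langle e_2,e_4\rangle$ is abelian) \emph{is} $2$-step solvable; for it, and for $\Lg_1$, you need the direct argument that no abelian ideal of codimension $1$ exists (or, for $\Lg_1$, additivity of $\al$, which you do invoke). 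If you stick to citing an explicit classification and checking each entry, none of this matters; if you want the self-contained version, the derived-series bookkeeping needs to be redone.
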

\vspace*{0.3cm}
\begin{proof}
The proof is straightforward, using a classification of low-dimensional
Lie algebras, e.g., the one given in \cite{BU18}.
Note that $\Lg_4(\al)\simeq \Lg_4(\be)$ if and only if $\al\be=1$ or $\al=\be$.
\end{proof}

\section{Nilpotent Lie algebras}

In a nilpotent Lie algebra $\Lg$ any subalgebra of codimension $1$ is automatically
an ideal. Hence given an abelian subalgebra of maximal dimension $n-1$ we obtain
an abelian ideal of dimension $n-1$. In particular, $\al(\Lg)=n-1$ for a nilpotent
Lie algebra implies $\be(\Lg)=\al(\Lg)$, and we can explicitly provide such ideals. 
We are able to extend this result to the case $\al(\Lg)=n-2$. Given an abelian
subalgebra of dimension $n-2$ we can construct an abelian ideal of dimension $n-2$.
This is non-trivial, since the abelian subalgebra of maximal dimension $n-2$ 
need not be an ideal in general.
Of course, the existence of such an ideal follows already from proposition $\ref{2.5}$,
as does the equality $\al(\Lg)=\be(\Lg)$. However, the existence proof is not constructive.
Our proof will be constructive and elementary, which might be more
appropriate to our special situation. 

\begin{prop}\label{3.1}
Let $\Lg$ be a nilpotent Lie algebra of dimension $n$ over a field of
characteristic zero satisfying $\al(\Lg)=n-2$.
Then there exists an algorithm to construct an abelian ideal of dimension $n-2$
from an abelian subalgebra of dimension $n-2$. In particular we have
$\be(\Lg)=\al(\Lg)$.
\end{prop}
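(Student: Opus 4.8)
The plan is to argue constructively by a short case analysis, reshaping the given abelian subalgebra $\La$ (with $\dim\La=n-2$) until it becomes an ideal. Since $\Lg$ is nilpotent, $N_\Lg(\La)$ strictly contains $\La$, and as $\La$ has codimension $2$ there are only two possibilities. If $N_\Lg(\La)=\Lg$, then $\La$ is already an abelian ideal of dimension $n-2$ and there is nothing to do. Otherwise $\Lh:=N_\Lg(\La)$ has codimension $1$, hence is an ideal of $\Lg$ (any codimension-one subalgebra of a nilpotent Lie algebra is an ideal). In that case I would fix $x\in\Lg\setminus\Lh$ and $y\in\Lh\setminus\La$, so that $\Lg=\K x\oplus\K y\oplus\La$ and $\Lh=\K y\oplus\La$; since $\La$ is an ideal of $\Lh$ and $\Lh$ is an ideal of $\Lg$, the structure constants have the form
\[
[\La,\La]=0,\qquad [y,a]=C(a),\qquad [x,a]=\mu(a)\,y+D(a),\qquad [x,y]=\be\,y+v,
\]
with linear maps $C,D\colon\La\to\La$, a linear form $\mu\colon\La\to\K$, a scalar $\be$ and a vector $v\in\La$. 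An abelian subalgebra of maximal dimension is self-centralizing (cf.\ the proof of Lemma~\ref{2.8}), so $y\notin Z_\Lg(\La)$ and therefore $C\ne 0$.

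Next I would dispose of the easy subcase $\mu=0$: then $[x,\La]\subseteq\La$, hence $[\Lg,\La]\subseteq\La$ and $\La$ is already an ideal. So assume $\mu\ne 0$ and put $\La_0:=\ker\mu$, a subspace of $\La$ of dimension $n-3$. Applying the Jacobi identity to the triples $(x,a,a')$ and $(x,y,a)$ with $a,a'\in\La$ should yield the relations
\[
\mu(a')\,C(a)=\mu(a)\,C(a'),\qquad \mu\circ C=0,\qquad DC-CD=\be\,C .
\]
The first of these forces $C$ to have rank one: choosing $a_1\in\La$ with $\mu(a_1)=1$ and setting $w:=C(a_1)$, it says $C(a)=\mu(a)\,w$ for all $a$, where $w\ne 0$ (because $C\ne 0$) and $w\in\La_0$ (because $\mu(w)=\mu(Ca_1)=0$); in particular $\ker C=\ker\mu=\La_0$. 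Plugging $a\in\La_0=\ker C$ into $DC-CD=\be\,C$ then gives $C(Da)=0$, i.e.\ $D(\La_0)\subseteq\La_0$.

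With these facts in hand I would construct the ideal. The subspace $\K y\oplus\La_0$ is abelian (since $[y,\La_0]=C(\La_0)=0$) and has dimension $n-2$, but it need not be an ideal, the obstruction being that the $\La$-component of $[x,y]=\be\,y+v$ can leave $\La_0$. To repair this I would replace $y$ by $y':=y+c\,a_1$ for a suitable scalar $c$; this changes neither $\La_0$ nor $\mu$ nor $C$, and $I:=\K y'\oplus\La_0$ is still abelian of dimension $n-2$. A short computation gives
\[
[x,y']=(\be+c)\,y'+\bigl(v+c\,D(a_1)-c(\be+c)\,a_1\bigr),
\]
and the bracketed $\La$-part lies in $\La_0=\ker\mu$ precisely when $c$ is a root of the quadratic
\[
c^{2}+\bigl(\be-\mu(Da_1)\bigr)\,c-\mu(v)=0 ,
\]
which exists because $\C$ is algebraically closed. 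Fixing such a $c$, the remaining bracket conditions are immediate: $[x,\La_0]=D(\La_0)\subseteq\La_0$, $[y',\La_0]=0$, $[\La,y']\subseteq\K w\subseteq\La_0$ and $[\La,\La_0]=0$. Hence $I$ is an abelian ideal of $\Lg$ of dimension $n-2$, so $\be(\Lg)\ge n-2=\al(\Lg)$ and therefore $\be(\Lg)=\al(\Lg)$. Each step — forming $N_\Lg(\La)$, reading off the structure constants, computing $\ker\mu$, solving a quadratic — is effective, which gives the asserted algorithm.

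The part I expect to be most delicate is the middle step: extracting from the Jacobi identity exactly the information needed, namely that $C$ has rank one with $\ker C=\ker\mu$ and that $D$ stabilizes $\ker\mu$, and then recognizing that the one-parameter family $\K(y+c\,a_1)\oplus\ker\mu$ always contains an ideal — this last point being precisely where algebraic closedness enters, in accordance with Proposition~\ref{2.5} and the real counterexample given earlier. Carrying the possible $y$-term $\be\,y$ of $[x,y]$ through the quadratic is a minor but unavoidable nuisance, and one should also check the degenerate small-dimensional cases (for instance $n=3$ is vacuous by Lemma~\ref{mub}).
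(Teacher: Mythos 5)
Your proof is correct, and after the common first step (the normalizer of $\La$ is either all of $\Lg$, in which case we are done, or a codimension-one ideal $\Lh$) it takes a genuinely different route from the paper's. The paper fixes a basis $(e_1,\dots,e_n)$ with $e_1=x$ and $e_2=y$ in your notation, uses nilpotency of $\ad(e_1)$, $\ad(e_2)$, $\ad(e_3)$ to show that $[e_2,e_3]$ is a nonzero central element, and forms the same hyperplane you call $\La_0$: its $\La_1=\langle v_4,\dots,v_n\rangle$ with $v_j=\al_{j2}e_3-e_j$ is exactly $\ker\mu$. The two arguments then diverge in how they complete $\La_0$ to an ideal of dimension $n-2$. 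The paper adjoins $\ad(e_1)^{\ell-1}(e_2)$ for the largest $\ell$ with $\ad(e_1)^{\ell-1}(e_2)\notin\La_1$, exploiting nilpotency of $\ad(e_1)$; you adjoin $y+c\,a_1$ with $c$ a root of an explicit quadratic, invoking algebraic closedness. Your version buys a cleaner, coordinate-free account of the structure (the rank-one shape of $C$ and the $D$-invariance of $\ker\mu$ are exactly the right intermediate facts), at the apparent cost of a field hypothesis that the paper's construction does not visibly need. In fact that cost is illusory: since $\La_0$ is invariant under all the relevant brackets, $\ad(x)$ induces a nilpotent endomorphism of the two-dimensional space $\Lh/\La_0$ with matrix entries $\be$, $1$, $\mu(v)$, $\mu(Da_1)$ in the basis $(\bar y,\bar a_1)$; vanishing of its trace and determinant forces $\mu(Da_1)=-\be$ and $\mu(v)=-\be^2$, so your quadratic collapses to $(c+\be)^2=0$ and $c=-\be$ works over any field, matching the field-independence of the paper's nilpotent argument. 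Your closing remarks (the case $\mu=0$, the vacuity of small $n$ by Lemma~\ref{mub}) are also handled correctly.
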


\begin{proof}
Let $\La$ be an abelian subalgebra of $\Lg$ of maximal dimension $n-2$. Choose a basis
$(e_3,\ldots ,e_n)$ for $\La$. The normalizer of $\La$,
\[
N_{\Lg}(\La)=\{x \in \Lg\mid [x,\La]\subseteq \La \}
\]
is a subalgebra strictly containing $\La$. We may assume that $N_{\Lg}(\La)$
has dimension $n-1$, because otherwise $N_{\Lg}(\La)=\Lg$, implying that $\La$ is
already an abelian ideal of maximal dimension $n-2$.  \\
We may extend the basis of $\La$ to a basis $(e_1,\ldots ,e_n)$ of $\Lg$, such that
$N_{\Lg}(\La)=\langle e_2,\ldots ,e_n\rangle$. Because $N_{\Lg}(\La)$ has codimension $1$,
it is an ideal in $\Lg$. In particular we have
\[
[e_1,N_{\Lg}(\La)]\subseteq N_{\Lg}(\La).
\]
On the other hand, $[e_1,\La]$ is not contained in $\La$, since $e_1$ is not in
$N_{\Lg}(\La)$. Hence there exists a vector $e_k$ such that $[e_1,e_k]$ is not in
$\La$. By relabelling $e_3$ and $e_k$ we may assume that $k=3$. Hence
writing
\[
[e_1,e_j]=\al_{j2}e_2+\cdots +\al_{jn}e_n
\]
for $j\ge 2$, we may assume that $\al_{32}=1$, i.e., $[e_1,e_3]=e_2+\al_{33}e_3+\cdots 
+\al_{3n}e_n$.

\begin{lem}
The following holds:
\begin{itemize}
\item[(1)] We have $[e_2,e_j]=\al_{j2}[e_2,e_3]$ for all $j\ge 3$.
\item[(2)] The element $[e_2,e_3]$ is nonzero and contained in the center of $\Lg$.
\item[(3)] The normalizer $N_{\Lg}(\La)$ is two-step nilpotent.
\item[(4)] We have $[N_{\Lg}(\La), v_j]=0$ for all $j\ge 3$, where $v_j=\al_{j2}e_3-e_j$.
\end{itemize}
\end{lem}

\begin{proof}
The first statement follows from the Jacobi identity. We have, for all $j\ge 3$,
\begin{align*}
0 & = [e_1,[e_3,e_j]] \\
  & = -[e_3,[e_j,e_1]]-[e_j,[e_1,e_3]] \\
  & = -\al_{j2}[e_2,e_3]+[e_2,e_j].
\end{align*}
Concerning $(2)$, assume first that $[e_2,e_3]=0$. Then the subalgebra given by 
$\langle e_2,e_3,v_4,\ldots ,v_n\rangle $
would be an abelian subalgebra of dimension $n-1$, with the $v_j$ defined as in $(4)$.
This is a contradiction to $\al(\Lg)=n-2$. Hence $[e_2,e_3]$ is non-zero. 
Since $e_2 \in N_{\mathfrak{g}}(\mathfrak{a})$, we have that $[e_2,e_3] \in \La$.
We write
\[
[e_2,e_3]=\be_{33}e_3+\cdots +\be_{3n}e_n.
\]
We have $[e_3,[e_2,e_3]]=0$. Now we have
\[
[e_2,[e_2,e_3]]=(\be_{33}\al_{32}+\cdots +\be_{3n}\al_{n2})[e_2,e_3].
\]
Since $\ad(e_2)$ is nilpotent, it follows $[e_2,[e_2,e_3]]=0$. 
In the same way, $[e_1,[e_2,e_3]]=[e_2,[e_1,e_3]]-[e_3,[e_1,e_2]]=\la [e_2,e_3]$,
so that $[e_1,[e_2,e_3]]=0$, because $\ad(e_1)$ is nilpotent.
Finally, $[e_j,[e_2,e_3]]=0$ for all $j\ge 3$, since $[e_2,e_3]\in \La$. It follows that
$[e_2,e_3]$ lies in the center of $\Lg$. \\
To show $(3)$, note that $[N_{\Lg}(\La),N_{\Lg}(\La)]$ is generated by $[e_2,e_3]$, so that
\[
[N_{\Lg}(\La),N_{\Lg}(\La)]\subseteq Z(\Lg).
\]
This proves $(3)$. \\
The statement $(4)$ follows from $(1)$.
\end{proof}

Now let $\La_1=\langle v_4,\ldots v_n \rangle$. This is an abelian subalgebra
$\La_1\subseteq \La\subseteq \Lg$ of dimension $n-3$. There exists an integer
$\ell \ge 1$ satisfying 
\begin{align*}
\ad(e_1)^{\ell-1}(e_2) & \not\in \La_1,\\
\ad(e_1)^{\ell}(e_2) & \in \La_1,\\
\end{align*}
because $\ad(e_1)$ is nilpotent. We define
\[
I:=\langle \ad(e_1)^{\ell-1}(e_2), v_4,\ldots ,v_n \rangle
\]
We will show that $I$ is an abelian ideal of maximal dimension $n-2$. 
First of all, $I$ is a subalgebra of dimension $n-2$. It is also abelian:
because $N_{\Lg}(\La)$ is an ideal, $\ad(e_1)^k(e_2)\in N_{\Lg}(\La)$ for all
$k\ge 0$. Then
\begin{align*}
[\ad(e_1)^k(e_2),v_j] & =[\la_2e_2+\cdots \la_ne_n, \al_{j2}e_3-e_j]\\
  & = \la_2\al_{j2}[e_2,e_3]-\la_2[e_2,e_j] \\
  & = 0. 
\end{align*}
It remains to show that $I$ is an ideal, i.e., that $\ad(e_i)(I)\subseteq I$ for
all $i\ge 1$.
We have
\begin{align*}
[e_1,\ad(e_1)^{\ell-1}(e_2)]& =\ad(e_1)^{\ell}(e_2)\in \La_1\subseteq I,\\
[e_k,\ad(e_1)^{\ell-1}(e_2)]& \in [N_{\Lg}(\La),N_{\Lg}(\La)]\subseteq Z(\Lg)\subseteq I, 
\end{align*}
for all $k\ge 2$. Here we have used lemma $\ref{2.8}$ to conclude that $Z(\Lg)\subseteq I$.
Also, $[e_k,v_j]=0 \in I$ for all $k\ge 2$ and $j\ge 4$. It remains to show that
\[
[e_1,v_j]\in I \text{ for all } j\ge 4.
\]
We have
\begin{align*}
[e_2,[e_1,v_j]] & = [e_1,[e_2,v_j]]+[v_j,[e_1,e_2]] \\
 & = 0.
\end{align*}
This implies that $[e_1,v_j]$ commutes with all elements from $I$. If it were not
in $I$, then $\langle [e_1,v_j], I\rangle$ would be an abelian subalgebra of dimension
$n-1$, which is impossible. It follows that $[e_1,v_j]\in I$.
\end{proof}

\begin{rem}
There is also an algorithm to compute $\al(\Lg)$ for an arbitrary complex Lie algebra
of finite dimension, see \cite{CNT}.
\end{rem}

In connection with the toral rank conjecture (TRC), which asserts that 
\[
\dim  H^{\ast}(\Lg,\C)\ge 2^{\dim Z(\Lg)}
\]
for any finite-dimensional, complex nilpotent Lie algebra, there are interesting 
examples of nilpotent Lie algebras $\Lg$ given, with $\be(\Lg)=n-2$, of 
dimension $n\ge 10$, see \cite{POT}. These algebras also have the property that
all its derivations are singular. An obvious question here is whether there exist
characteristically nilpotent Lie algebras (CNLAs) $\Lg$ of dimension $n$ with
$\al(\Lg)=n-2$. This is indeed the case.

\begin{ex}
The Lie algebra of dimension $n=7$ defined by $[x_1,x_i]=x_{i+1}, 2\le i\le 6$ and
$[x_2,x_3]=x_6+x_7,\; [x_2,x_4]=x_7$ is characteristically nilpotent, i.e., all of its
derivations are nilpotent, see \cite{MAG}. Furthermore it satisfies $\al(\Lg)=n-2=5$.
\end{ex}

We can find such examples in all dimensions $n\ge 7$. This suggests that nilpotent Lie algebras 
$\Lg$ with $\al(\Lg)=n-2$ are not so easy to understand. The algebra in this example is 
{\it filiform nilpotent}, i.e., has maximal nilpotency class with respect to its dimension.
In this case we can say something more on $\al(\Lg)$. 

\begin{defi}
Let $\Lg$ be a nilpotent Lie algebra, and $C^1(\Lg)=\Lg$, $C^i(\Lg)=[\Lg,C^{i-1}(\Lg)]$.
Then $\Lg$ is called {\it $k$-abelian}, if $k$ is the smallest positive integer
such that the ideal $C^k(\Lg)$ is abelian. 
\end{defi}

For a nilpotent $k$-abelian Lie algebra $\Lg$ we have $\dim (C^k(\Lg))\le \be (\Lg)$. 
In general equality does not hold. However, if $\Lg$ is filiform nilpotent of dimension
$n\ge 6$ with $k\ge 3$, then we do have equality:

\begin{prop}
Let $\Lg$ be a $k$-abelian filiform Lie algebra of dimension $n\ge k+3\ge 6$. 
Then $\be(\Lg)=\al(\Lg)=\dim (C^k(\Lg))=n-k$, and $C^k(\Lg)$ is the unique abelian ideal 
of maximal dimension. We have
\[
\left\lceil \frac{n}{2} \right\rceil \le \be(\Lg) \le n-3.
\]
\end{prop}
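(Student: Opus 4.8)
The first task is to understand the structure of a $k$-abelian filiform Lie algebra $\Lg$ of dimension $n$. Recall that in a filiform Lie algebra the lower central series has "corank one," i.e. $\dim C^i(\Lg) = n-i$ for $1 \le i \le n-1$ (and $C^n(\Lg)=0$). Hence $\dim C^k(\Lg) = n-k$, and the filiform condition forces $C^k(\Lg)$ to be the subalgebra spanned by a "tail" of any adapted basis $(e_1,\dots,e_n)$, where $[e_1,e_i]=e_{i+1}$ for $2 \le i \le n-1$. Since $\Lg$ is $k$-abelian, $C^k(\Lg)=\langle e_{k+1},\dots,e_n\rangle$ is abelian and $C^{k-1}(\Lg)=\langle e_k,\dots,e_n\rangle$ is not; as $C^k(\Lg)$ is always an ideal, this already gives $n-k \le \be(\Lg) \le \al(\Lg)$. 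So the entire content of the proposition is the reverse inequality $\al(\Lg)\le n-k$ together with the uniqueness claim.

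For $\al(\Lg) \le n-k$: suppose $\La$ is an abelian subalgebra of dimension $\ge n-k+1$. Consider the projection $\pi\colon\Lg \to \Lg/C^2(\Lg) \cong \K^2$ onto the two-dimensional abelianization (filiform algebras have two-dimensional abelianization). Since $\dim\La \ge n-k+1 \ge 4 > \dim C^2(\Lg)=n-2$ is false in general, I instead argue via the flag: if $\La \subseteq C^1(\Lg)$ is false, then $\La$ meets the complement of $C^2(\Lg)$, i.e. contains some $x$ with nonzero $e_1$-component; then $\ad(x)$ acts on $\La$ and, because $\La$ is abelian, $\La \subseteq \ker\ad(x)^{\dim\La}$... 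The cleaner route, and the one I would actually pursue: use that $\La$ abelian of large dimension must, after at most a small correction, contain $C^2(\Lg)$ or be contained in $C^1(\Lg)$. If $\La$ contains no element outside $C^1(\Lg)$'s "$e_1$-direction," count dimensions inside $C^2(\Lg)=\langle e_3,\dots,e_n\rangle$; an abelian subalgebra of $\langle e_2,\dots,e_n\rangle$ containing $e_2$ is impossible past $C^{k-1}$ because $[e_2,e_k]\neq 0$ by $k$-abelianness. I expect to split into the cases (i) $\La \subseteq \langle e_2,\dots,e_n\rangle$ and (ii) $\La \not\subseteq \langle e_2,\dots,e_n\rangle$, in the latter rescaling so $e_1 + (\text{lower}) \in \La$, whence $\ad(e_1)$-nilpotency and the abelian condition pin down $\La$ inside a $\ad(e_1)$-invariant piece of dimension $\le n-k$. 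This case analysis, and in particular showing an abelian subalgebra of $\langle e_2,\dots,e_n\rangle$ cannot reach dimension $n-k+1$ when $C^{k-1}(\Lg)$ is nonabelian, is the main obstacle; it uses $n \ge k+3$ to have enough room for the nonvanishing bracket $[e_2,e_{k}]$ (or $[e_2, e_{k+1}]$) to be an honest obstruction rather than landing in a degenerate low-dimensional coincidence.

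For uniqueness: once $\al(\Lg)=n-k$ and $C^k(\Lg)$ is one abelian ideal of that dimension, suppose $I$ is another. By Lemma~\ref{2.8}, $Z(\Lg) \subseteq I$; for a filiform algebra $Z(\Lg)=\langle e_n\rangle$, so $e_n \in I$. An abelian ideal $I$ of dimension $n-k$ is self-centralizing, hence equals $Z_{\Lg}(I)$; if $I \neq C^k(\Lg)$ then $I$ contains an element $x$ with nonzero $e_j$-component for some $j \le k$, and because $I$ is an ideal $\ad(e_1)^{k-j+1}(x)$ stays in $I$ while its leading term is $e_{k+1}$, forcing $\langle e_{k+1},\dots,e_n\rangle = C^k(\Lg) \subseteq I$; comparing dimensions, $x$ modulo $C^k(\Lg)$ has a unique nonzero coordinate index $j\le k$, and then $[x, e_{k}]$ or a suitable bracket is nonzero in $C^{k-1}\setminus C^k$ (using $k$-abelianness again), contradicting $I$ abelian. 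So $I=C^k(\Lg)$.

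Finally, the numerical bounds $\lceil n/2 \rceil \le \be(\Lg) \le n-3$. The upper bound $\be(\Lg)=n-k \le n-3$ is just $k \ge 3$. For the lower bound $n-k \ge \lceil n/2\rceil$, i.e. $k \le \lfloor n/2 \rfloor$: I would show that for a filiform algebra the ideal $C^k(\Lg)$ being abelian cannot fail for too long, because consecutive brackets $[e_i,e_j]$ lie in $C^{i+j-2}(\Lg)$ roughly, so $C^{\lceil n/2\rceil +1}(\Lg)$ is automatically abelian — indeed if $a,b \in C^m(\Lg)$ then $[a,b]\in C^{2m-1}(\Lg)$ which vanishes once $2m-1 \ge n$, i.e. $m \ge (n+1)/2$. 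Hence $k \le \lceil (n+1)/2 \rceil$; a slightly more careful look at the grading (brackets of two elements of $C^m$ land in $C^{2m}$ in the graded/adapted setting, or one uses that $\dim C^m = n-m$) tightens this to $k \le \lfloor n/2 \rfloor$, giving $\be(\Lg) = n-k \ge \lceil n/2 \rceil$. The one delicate point is justifying the precise cutoff $2m$ versus $2m-1$; I would handle it by working in an adapted basis and noting $[C^m, C^{m'}] \subseteq C^{m+m'}$ for filiform algebras, which is standard, so that $C^m$ abelian is forced once $2m > n-1$.
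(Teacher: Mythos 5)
There is a genuine gap at the step you yourself flag as ``the main obstacle'': the inequality $\al(\Lg)\le n-k$ for abelian \emph{subalgebras}. Your case analysis (i)/(ii) is only a plan with several false starts, and the decisive case --- ruling out an abelian subalgebra of $\langle e_2,\ldots,e_n\rangle$ of dimension $n-k+1$ --- is never actually carried out. The paper sidesteps this entirely: it proves only that every abelian \emph{ideal} $I$ is contained in $C^k(\Lg)$, which is easy because an ideal is $\ad(e_1)$-invariant. (Take $x\in I\setminus C^k(\Lg)$; if its $e_1$-coordinate is nonzero then $C^2(\Lg)\subseteq I$, otherwise let $i\le k$ be minimal with $\al_i\neq 0$, iterate $\ad(e_1)$ and descend from $\al_ie_n\in I$ to get $\langle e_i,\ldots,e_n\rangle\subseteq I$; either way $C^{k-1}(\Lg)\subseteq I$, contradicting that $I$ is abelian.) This yields $\be(\Lg)=n-k$ and the uniqueness in one stroke, and then $\al(\Lg)=\be(\Lg)$ is just Proposition \ref{2.5}, since the ground field is $\C$. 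Note that your uniqueness paragraph already contains essentially this $\ad(e_1)$-argument; had you applied it to an arbitrary abelian ideal rather than only to one of dimension $n-k$, and then invoked Proposition \ref{2.5}, the hard subalgebra bound would have been unnecessary.

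A second, smaller problem is the lower bound. You need $k\le\lfloor n/2\rfloor$, but your final justification (``$C^m$ abelian is forced once $2m>n-1$'') only gives $k\le\lceil n/2\rceil$, which for odd $n$ yields $\be(\Lg)\ge\lfloor n/2\rfloor$, one less than claimed. The missing observation is skew-symmetry: to check that $C^m(\Lg)=\langle e_{m+1},\ldots,e_n\rangle$ is abelian you only need $[e_i,e_j]$ for \emph{distinct} $i,j\ge m+1$, so $i+j\ge 2m+3$ and $[e_i,e_j]\in C^{i+j-2}(\Lg)\subseteq C^{2m+1}(\Lg)=\langle e_{2m+2},\ldots,e_n\rangle$, which vanishes as soon as $2m+2>n$, i.e. for all $m\ge\lfloor n/2\rfloor$. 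This gives $k\le\lfloor n/2\rfloor$ and hence $\be(\Lg)=n-k\ge\lceil n/2\rceil$.
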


\begin{proof}
We may choose an adapted basis $(e_1,\ldots ,e_n)$ for $\Lg$, see \cite{BU13}.
Then $[e_1,e_i]=e_{i+1}$ for all $2\le i\le n-1$, and $C^j(\Lg)=\langle
e_{j+1},\ldots ,e_n \rangle$ with $\dim (C^j(\Lg))=n-j$
for all $j\ge 2$. By assumption $C^k(\Lg)$ is abelian, but $C^{k-1}(\Lg)$ is not.
We claim that every abelian ideal $I$ in $\Lg$ is contained in $C^k(\Lg)$.
This will finish the proof. Suppose that there is an abelian ideal $I$ 
which is not contained in $C^k(\Lg)$. 
We will show that this implies $C^{k-1}(\Lg)\subseteq I$, so that $I$ cannot be abelian, 
a contradiction.  
Let $x=\al_1e_1+\cdots +\al_ne_n$ be a nontrivial element of $I$ not lying in $C^k(\Lg)$, i.e.,
with $\al_i\neq 0$ for some $i<k+1$. 
If $\al_1\neq 0$, then for all $2\le j\le n-1$ we have
$[e_j,x]=-\al_1e_{j+1}+\al_2[e_j,e_2]+\cdots +\al_n[e_j,e_n]\in I$.
It follows that $\langle e_3, \cdots , e_n \rangle =C^2(\Lg)\subseteq I$. Since $k>2$ this 
implies that $I$ is not abelian, a contradiction. 
Let $1<i<k+1$ be minimal such that $\al_i\neq 0$. Then for all $0\le j\le n-i$
we have $\ad(e_1)^j(x)=\al_ie_{i+j}+\cdots +\al_{n-j}e_n\in I$.
It follows that $\langle e_i,\ldots ,e_n\rangle \subseteq I$. Indeed, for $j=n-i$ we
have $\al_ie_n\in I$, then $\al_ie_{n-1}\in I$, and so on until $\al_ie_i\in I$.
Since $i\le k$ we have  $C^{k-1}(\Lg)\subseteq I$ and we are finished. Finally, we have
$3\le k\le \lfloor \frac{n}{2}\rfloor $, so that we obtain the estimate on $\be(\Lg)$.
\end{proof}

\begin{rem}
If $\Lg$ is filiform with $k=2$, then $\Lg$ is $2$-step solvable and we have 
$\be(\Lg)=n-1$ or  $\be(\Lg)=n-2$. Indeed,
if $\Lg$ is the standard graded filiform $\Lf_n$ of dimension $n\ge 3$,
then $\be(\Lg)=n-1$ and $I=\langle e_2,\ldots ,e_n\rangle$ is an abelian ideal
of dimension $n-1$. Otherwise $C^2(\Lg)=[\Lg,\Lg]$ is an abelian ideal of maximal 
dimension, so that $\be(\Lg)=n-2$. 
\end{rem}

The invariant $\al(\Lg)$ for complex nilpotent Lie algebras has been determined
up to dimension $6$ in connection with degenerations \cite{BU10},\cite{SEE}, and
for real Lie algebras of dimension $6$ in \cite{MAG}, appendix $4.4$.

We want to give a list here, thereby correcting a few typos in \cite{SEE}.
In dimension $7$ there is no list for $\al(\Lg)$, as far as we know.
We use the classification of nilpotent Lie algebras up to dimension $7$ by 
Magnin \cite{MAG}, and for dimension $6$ also by de Graaf \cite{DGR} and Seeley \cite{SEE}, 
to give tables for $\al(\Lg)$. The result for the indecomposable algebras in dimension 
$n\le 5$ is as follows:
\vspace*{0.5cm}
\begin{center}
\begin{tabular}{c|c|c|c}
 $\Lg$ & $\dim(\Lg)$ & Lie brackets & $\al(\Lg)$ \\
\hline
$\Ln_3$     & $3$ & $[e_1,e_2]=e_3$ & $2$ \\
$\Ln_4$     & $4$ & $[e_1,e_2]=e_3, \,[e_1,e_3]=e_4$ & $3$ \\
$\Lg_{5,6}$ & $5$ & $[e_1,e_2]=e_3,\, [e_1,e_3]=e_4,\, [e_1,e_4]=e_5,\, [e_2,e_3]=e_5$ & $3$ \\
$\Lg_{5,5}$ & $5$ & $[e_1,e_2]=e_3,\, [e_1,e_3]=e_4,\, [e_1,e_4]=e_5$ & $4$  \\
$\Lg_{5,3}$ & $5$ & $[e_1,e_2]=e_4,\, [e_1,e_4]=e_5,\, [e_2,e_3]=e_5$ & $3$  \\
$\Lg_{5,4}$ & $5$ & $[e_1,e_2]=e_3,\, [e_1,e_3]=e_4,\, [e_2,e_3]=e_5$ & $3$  \\
$\Lg_{5,2}$ & $5$ & $[e_1,e_2]=e_4,\, [e_1,e_3]=e_5$ & $4$ \\
$\Lg_{5,1}$ & $5$ & $[e_1,e_3]=e_5,\, [e_2,e_4]=e_5$ & $3$  \\
\end{tabular}
\end{center}
\vspace*{0.5cm}
For $n=6$ we have:
\begin{center}
\begin{tabular}{c|c|c|c}
Magnin & de Graaf & Seeley & $\al(\Lg)$ \\
\hline
$\Lg_{6,20}$ & $L_{6,14}$ & $12346_E$ & $3$ \\
$\Lg_{6,18}$ & $L_{6,16}$ & $12346_C$ & $3$ \\
$\Lg_{6,19}$ & $L_{6,15}$ & $12346_D$ & $4$ \\
$\Lg_{6,17}$ & $L_{6,17}$ & $12346_B$ & $4$ \\
$\Lg_{6,15}$ & $L_{6,21}(1)$ & $1346_C$ & $4$ \\
$\Lg_{6,13}$ & $L_{6,13}$ & $1246$ & $4$ \\
$\Lg_{6,16}$ & $L_{6,18}$ & $12346_A$ & $5$ \\
$\Lg_{6,14}$ & $L_{6,21}(0)$ &  $2346$ & $4$ \\
$\Lg_{6,9}$ & $L_{6,19}(1)$ & $136_A$ & $4$ \\
$\Lg_{6,12}$ & $L_{6,11}$ & $1346_B$ & $4$ \\
$\Lg_{5,6}\oplus\C$ & $L_{6,6}$ & $1+1235_B$ & $4$ \\
%
$\Lg_{6,5}$ &  $L_{6,24}(1)$ & $246_E$ & $4$ \\
$\Lg_{6,10}$ &  $L_{6,20}$ & $136_B$ & $4$ \\
$\Lg_{6,11}$ &  $L_{6,12}$ & $1346_A$ & $4$ \\
$\Lg_{5,5}\oplus\C$ &  $L_{6,7}$ & $1+1235_A$ & $5$ \\
$\Lg_{6,8}$ &  $L_{6,24}(0)$ & $246_D$ & $4$ \\
$\Lg_{6,4}$ &  $L_{6,19}(0)$ & $246_B$ & $4$ \\
$\Lg_{6,7}$ &  $L_{6,23}$ & $246_C$ & $4$ \\
$\Lg_{6,2}$ &  $L_{6,10}$ & $146$ & $4$ \\
$\Lg_{6,6}$ &  $L_{6,25}$ & $246_A$ & $5$ \\
$\Lg_{5,4}\oplus \C$ &  $L_{6,9}$ & $1+235$ & $4$ \\
$\Lg_{5,3}\oplus \C$ &  $L_{6,5}$ & $1+135$ & $4$ \\
$\Ln_3\oplus \Ln_3$ &  $L_{6,22}(1)$ & $13+13$  & $4$ \\
$\Ln_4\oplus \C^2$ &  $L_{6,3}$ & $2+124$  & $5$ \\
$\Lg_{6,1}$ &  $L_{6,22}(0)$ & $26$ & $4$ \\
$\Lg_{6,3}$ &  $L_{6,26}$ & $36$ & $4$ \\
$\Lg_{5,2}\oplus \C$ & $L_{6,8}$ & $1+25$  & $5$ \\
$\Lg_{5,1}\oplus \C$ & $L_{6,4}$ & $1+15$  & $4$ \\
$\Ln_3\oplus \C^3$ &  $L_{6,2}$ & $3+13$  & $5$ \\
$\C^6$ &  $L_{6,1}$ & $0$  & $6$ \\
\end{tabular}
\end{center}
\vspace*{0.5cm}
The Hasse diagram for the degenerations of nilpotent Lie algebras in dimension $6$ is given
in the end of the paper. For more details on degenerations see \cite{BU18}.
There are some typos in \cite{SEE} which we found
by computing all degenerations again. This is not part of this paper, but will be
published elsewhere. The Hasse diagram is only listed for the interested reader as an appendix, 
to demonstrate that the computation of $\al(\Lg)$ has interesting applications. 
The diagram also gives a good control for the computation of $\al(\Lg)$, since it is well known
that $\al(\Lg)\le \al(\Lh)$ if $\Lg \ra_{\rm deg} \Lh$, see \cite{BU18}. \\
In dimension $7$ we use the classification of Magnin to compute $\al(\Lg)$ for all indecomposable,
complex nilpotent Lie algebras of dimension $7$, using an algorithm of \cite{CNT}.
Note that $4\le \al(\Lg)\le 6$ in this case, see lemma $\ref{mub}$. The result is as follows:\\[0.2cm]
\begin{align*}
\al(\Lg) = 4 : \quad \Lg  & = \Lg_{7,0.1},\,\Lg_{7,0.4(\la)},\,  \Lg_{7,0.5},\,
\Lg_{7,0.6},\, \Lg_{7,0.7},\, \Lg_{7,0.8},\, \Lg_{7,1.02},\, \Lg_{7,1.03},\, \Lg_{7,1.1(i_{\la}), 
\la\neq 1},\, \Lg_{7,1.1(ii)},\\
 & \hspace{0.56cm} \Lg_{7,1.1(iii)},\, \Lg_{7,1.1(iv)},\, \Lg_{7,1.1(v)},\, \Lg_{7,1.1(vi)},\, 
\Lg_{7,1.2(i_{\la}),\la\neq 1},\, \Lg_{7,1.2(ii)},\, \Lg_{7,1.2(iii)},\, \Lg_{7,1.2(iv)},\\
 & \hspace{0.56cm} \Lg_{7,1.3(i_{\la}),\la\neq 0},\, \Lg_{7,1.3(ii)},\, \Lg_{7,1.3(iii)},\, \Lg_{7,1.3(iv)},\,
 \Lg_{7,1.3(v)},\, \Lg_{7,1.5},\, \Lg_{7,1.8},\, \Lg_{7,1.11},\, \Lg_{7,1.14},\\
 & \hspace{0.56cm} \Lg_{7,1.17},\, \Lg_{7,1.19},\, \Lg_{7,1.20},\, \Lg_{7,1.21},\,
 \Lg_{7,2.1(i_{\la}),\la\neq 0,1},\, \Lg_{7,2.1(ii)},\, \Lg_{7,2.1(iii)},\, \Lg_{7,2.1(iv)},\, \Lg_{7,2.1(v)},\\
 & \hspace{0.56cm} \Lg_{7,2.2},\, \Lg_{7,2.4},\, \Lg_{7,2.5},\, \Lg_{7,2.6},\, \Lg_{7,2.10},\,
 \Lg_{7,2.12},\, \Lg_{7,2.13},\, \Lg_{7,2.17},\, \Lg_{7,2.23},\, \Lg_{7,2.26},\, \Lg_{7,2.28},\\
 & \hspace{0.56cm} \Lg_{7,2.29},\, \Lg_{7,2.30},\, \Lg_{7,2.34},\, \Lg_{7,2.35},\, \Lg_{7,2.37},\,
  \Lg_{7,3.1(i_{\la}),\la\neq 0,1},\, \Lg_{7,3.1(iii)},\, \Lg_{7,3.13},\, \Lg_{7,3.18},\\
 & \hspace{0.56cm}  \Lg_{7,3.22},\, \Lg_{7,4.4}.
\end{align*}

\begin{align*}
\al(\Lg) = 5 : \quad \Lg  & = \Lg_{7,0.2},\,\Lg_{7,0.3},\,  \Lg_{7,1.01(i)},\,
\Lg_{7,1.01(ii)},\, \Lg_{7,1.1(i_{\la}),\la=1},\, \Lg_{7,1.2(i_{\la}),\la=1},\, \Lg_{7,1.3(i_{\la}),\la=0},
\, \Lg_{7,1.4},\\
 & \hspace{0.56cm} \Lg_{7,1.6},\, \Lg_{7,1.7},\, \Lg_{7,1.9},\, \Lg_{7,1.10},\, 
\Lg_{7,1.12},\, \Lg_{7,1.13},\, \Lg_{7,1.15},\, \Lg_{7,1.16},\, \Lg_{7,1.18},\, \Lg_{7,2.1(i_{\la}),
\la=0,1}, \\
 & \hspace{0.56cm} \Lg_{7,2.7},\, \Lg_{7,2.8},\, \Lg_{7,2.9},\, \Lg_{7,2.11},\,
 \Lg_{7,2.14},\, \Lg_{7,2.15},\, \Lg_{7,2.16},\, \Lg_{7,2.18},\, \Lg_{7,2.19},\, \Lg_{7,2.20},\, \Lg_{7,2.21},\\
 & \hspace{0.56cm} \Lg_{7,2.22},\, \Lg_{7,2.24},\, \Lg_{7,2.25},\, \Lg_{7,2.27},\, \Lg_{7,2.31},\, 
\Lg_{7,2.32},\, \Lg_{7,2.33},\, \Lg_{7,2.36},\, \Lg_{7,2.38},\, \Lg_{7,2.39}, \\ 
 & \hspace{0.56cm} \Lg_{7,2.40},\,  \Lg_{7,2.41},\, \Lg_{7,2.42},\, \Lg_{7,2.43},\, \Lg_{7,2.44},\, 
\Lg_{7,2.45},\, \Lg_{7,3.1(i_{\la}), \la=0,1},\,\Lg_{7,3.3},\,\Lg_{7,3.4},\,\Lg_{7,3.5},  \\
 & \hspace{0.56cm} \Lg_{7,3.6},\, \Lg_{7,3.7},\, \Lg_{7,3.8},\, \Lg_{7,3.9},\, \Lg_{7,3.10},\,
  \Lg_{7,3.11},\, \Lg_{7,3.12},\, \Lg_{7,3.14},\, \Lg_{7,3.15},\, \Lg_{7,3.16},\, \Lg_{7,3.17},\\
 & \hspace{0.56cm}  \Lg_{7,3.19},\,\Lg_{7,3.21},\, \Lg_{7,3.23}\, \Lg_{7,3.24},\, \Lg_{7,4.1},\, 
\Lg_{7,4.3}.\\[0.2cm] 
\al(\Lg) = 6 : \quad \Lg  & = \Lg_{7,2.3},\,\Lg_{7,3.2},\,  \Lg_{7,3.20},\,\Lg_{7,4.2}.
\end{align*}

\newpage

$$
\begin{xy}
\xymatrix{
                         &            & \Lg_{6,20} \ar[lld] \ar[rrd]  &    &                  \\
\Lg_{6,18}\ar[d]\ar[ddrrr] \ar @/^3pc/[rrrrdd] &   &    &    &  \Lg_{6,19}\ar[d] \ar[dllll]\ar[dll]\\
\Lg_{6,17}\ar[d]\ar[dr]\ar[ddrrr]\ar[ddrrrr]& & \Lg_{6,15}\ar[dl]\ar[dr]\ar[drr] &
& \Lg_{6,13}\ar[d]\ar[dl] \\
\Lg_{6,16}  \ar @/_2pc/[dd]  & \Lg_{6,14}\ar[dl]\ar[d] & & \Lg_{6,9}\ar[d]
& \Lg_{6,12}\ar[dllll]\ar[dlll]\ar[dl]\ar[d] \\
\Lg_{5,6}\oplus\C\ar[d]\ar[drr] & \Lg_{6,5} \ar[dr]\ar[drrr] &  & \Lg_{6,10}\ar[dd]\ar[dr]
& \Lg_{6,11}\ar[dllll]\ar[ddl]\ar[d] \\
\Lg_{5,5}\oplus\C\ar[dd] &            & \Lg_{6,8}\ar[dl]\ar[dd]  & & \Lg_{6,4}\ar[dlll]\ar[dddll] \\
                         & \Lg_{6,7}\ar[dl]\ar[ddddr]\ar[drrr]  &
& \Lg_{6,2}\ar[ddl] \ar[dddll] \ar[dr] &         \\
\Lg_{6,6}\ar[ddr]  &     & \Lg_{5,4}\oplus\C  \ar[ddl] \ar @/^2pc/[ddd] &
& \Lg_{5,3}\oplus\C \ar[ddlll] \ar[ddl]\\
                         &            &  \Ln_3\oplus \Ln_3 \ar[dr] &      &               \\
                         & \Ln_4\oplus \C^2 \ar[ddr] &    &  \Lg_{6,1}\ar[ddl]\ar[ddd]  &         \\
                         &            &  \Lg_{6,3}\ar[d]    &          &                 \\
                         &            & \Lg_{5,2}\oplus\C \ar[dd] &     &                 \\
                         &            &      & \Lg_{5,1}\oplus\C \ar[dl] &           \\
                         &            & \Ln_3\oplus\C^3\ar[d]  &             &              \\
                         &            & \C^6              &                  &
 }
\end{xy}
$$

\newpage

\section{Acknowledgment}

We would like to thank Wolfgang Moens for helpful discusssions.

\end{document}